\theoremstyle{plain}
\newtheorem{Thm}{Theorem}[section]
\newtheorem{Lem}[Thm]{Lemma}
\newtheorem{Rk}[Thm]{Remark}
\numberwithin{equation}{section}
\newcommand{\ld}{\lambda}
\begin{document}
\title[nonlocal parabolic equation]{Vacuum isolating, blow up threshold and asymptotic behavior of solutions for a nonlocal parabolic equation}

\author[X. Li]{Xiaoliang Li}
\author[B. Liu]{Baiyu Liu}

\address[X. Li, B. Liu]{School of Mathematics and Physics\\
  University of Science and Technology Beijing \\
  30 Xueyuan Road, Haidian District
  Beijing, 100083\\
  P.R. China}
\email{liuby@ustb.edu.cn, liubymath@gmail.com}

\keywords{nonlocal parabolic equation; vacuum isolating; critical initial energy; asymptotic behavior}

\subjclass[2010]{35K20, 35K55}
\thanks{$^*$ Project supported by  the National Natural Science Foundation of China No.11671031, No.11201025 .}

\begin{abstract}
In this paper, we consider a nonlocal parabolic equation associated with initial and Dirichlet boundary conditions. Firstly, we discuss the vacuum isolating behavior of solutions with the help of a family of potential wells. Then we obtain a threshold of global existence and blow up for solutions with critical initial energy. Furthermore, for those solutions satisfy $J(u_0)\leq d$ and $I(u_0)\neq 0$, we show that global solutions decay to zero exponentially as time tends to infinity and the norm of blow-up solutions increase exponentially.
\end{abstract}

\maketitle

\section{Introduction}
In this paper, we study the following initial boundary value problem of nonlocal parabolic equation
\begin{equation}
\label{eq:pro}
\left\{
\begin{array}{ll}
u_t=\Delta u+\left(\frac{1}{|x|^{n-2}}*|u|^p\right)|u|^{p-2}u,& x\in \Omega, t>0,\\
u(x,t)=0,& x\in \partial \Omega, t>0,\\
u(x,0)=u_0(x),& x\in \Omega,
\end{array}
\right.
\end{equation}
where $\Omega$ is a bounded domain in $\mathbb{R}^n$  $(n\geq 3)$, $1< p<(n+2)/(n-2)$ and $\frac{1}{|x|^{n-2}}*|u|^p=\int_{\Omega}\frac{|u(y)|^p}{|x-y|^{n-2}}dy$.

Nonlocal parabolic type equations have been extensively used in ecology, especially to model a population in which individual competes for a shared rapidly equilibrated resource or a population in which individual communicated either visually or by chemical means \cite{Furter, Gourley, Ou2007, So2012}. Also, they can be applied to thermal physics with nonlocal source \cite{Lacey}.

As a model problem for studying the competition between the dissipative effect of diffusion and the influence of an explosive source term, problem
\begin{equation}\label{pro:sca}
\left\{
\begin{array}{ll}
u_t=\Delta u+|u|^{p-1}u, & x\in \Omega, t>0\\
u(x,t)=0, & x\in \partial \Omega, t\geq 0\\
u(x,0)=u_0(x), & x\in \Omega
\end{array}
\right.
\end{equation}
has been extensively studied (see \cite{Giga1986CMP, Liuya, Ma2012, IkeSuz1996HMJ, HosYam1991FE, Bal1977QJM, Filippo, Flavio} and the reference therein).
For the sub-critical case $1<p<(n+2)/(n-2)$, blow up in infinite time does not occur. The solution will either exist globally or blow up in finite time.
It is natural to ask under what conditions, will the
solution exist for all time; and under what conditions, will the solution
become unstable to collapse. To treat the above question, Sattinger \cite{Sattinger} (see also \cite{Payne}) established a powerful method which is called the potential well method.  By using this method, Ikehata and Suzuki \cite{IkeSuz1996HMJ}, Payne and Sattinger \cite{Payne} described the behavior of solutions for (\ref{pro:sca}) when the initial data has low energy (smaller than the height of potential well). Roughly speaking, they found a threshold of global solutions and blow up solutions.
Liu and Zhao \cite{Liuya}, Xu \cite{Xu2010QAM}  generalized the above results to the critical energy level initial data.
Moreover, by generalizing the potential well method, an important phenomena called vacuum isolating has been found by Liu and Zhao\cite{Liuya}, i.e., there is a region which does not contain any low energy solutions. Vacuum isolating phenomena has also been observed in various kinds of evolution equations with variational structures \cite{LiuJDE2003, LiuXu2007JMAA, Chenh}.

As a model problem of nonlocal parabolic equation, (\ref{eq:pro}) has been studied by \cite{LiuB2014, LMCPAA}. Well-posedness in $L^q(\Omega)$ has been setup. Precisely,
\begin{Thm}\label{thm:ste}[Theorem 6 and 7 in \cite{LiuB2014}]
	Let $u_0\in L^{q}(\Omega)$, $n-1\leq q<\infty$, $q>\frac{n}{2}(p-1)(2-\frac{1}{p})$. Then there exists $T_{\max}=T(||u_0||_{q})>0$ such that problem (\ref{eq:pro}) possesses a unique classical $L^{q}-$solution in $[0,T_{\max})$.
Moreover, either $T_{\max}=+\infty$ or $\lim_{t\to T_{\max}}||u(t)||_{q}=+\infty$.
\end{Thm}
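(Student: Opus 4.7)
The plan is standard for a semilinear parabolic problem: reformulate (\ref{eq:pro}) as an integral equation via the Dirichlet heat semigroup on $\Omega$, produce a local mild solution by a contraction mapping argument on $C([0,T]; L^q(\Omega))$, and then upgrade to a classical solution by parabolic bootstrap. Writing $F(u) := \bigl(\tfrac{1}{|x|^{n-2}} * |u|^p\bigr)|u|^{p-2}u$ and letting $\{S(t)\}_{t\geq 0}$ denote the Dirichlet heat semigroup, (\ref{eq:pro}) is formally equivalent to $u(t) = S(t)u_0 + \int_0^t S(t-s)F(u(s))\,ds$, and I would seek a fixed point of the right-hand side on the closed ball $\{u \in C([0,T]; L^q(\Omega)) : \sup_t \|u(t)\|_q \leq 2\|u_0\|_q\}$, with $T = T(\|u_0\|_q)$ chosen small enough for the map to be a contraction.

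The heart of the argument is a nonlinear bound of the shape
\[
\|F(u) - F(v)\|_{L^m} \leq C \bigl(\|u\|_{L^q} + \|v\|_{L^q}\bigr)^{2p-2} \|u-v\|_{L^q}
\]
for a suitable $m \leq q$. To obtain it I would first apply the Hardy--Littlewood--Sobolev inequality to the Riesz-type convolution $\tfrac{1}{|x|^{n-2}} * |u|^p$ (a Riesz potential of order $2$), then distribute the remaining factor $|u|^{p-2}u$ by H\"older's inequality, exploiting the bounded-domain embedding $L^q \hookrightarrow L^r$ for $r \leq q$ to juggle integrability. Combined with the heat smoothing estimate $\|S(t)f\|_{L^q} \leq C t^{-\frac{n}{2}(\frac{1}{m} - \frac{1}{q})} \|f\|_{L^m}$, the Duhamel integral stays in $L^q$ precisely when $\frac{n}{2}(\frac{1}{m}-\frac{1}{q}) < 1$. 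The hypotheses $q \geq n-1$ and $q > \frac{n}{2}(p-1)(2 - \frac{1}{p})$ are exactly what is needed to select an intermediate exponent $m$ compatible simultaneously with HLS (which requires $|u|^p \in L^a$ with $1 < a < n/2$), with H\"older, and with the time-integrability condition; the subcriticality $p < (n+2)/(n-2)$ keeps this admissible window non-empty.

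Once the Banach fixed point theorem produces a unique mild solution $u \in C([0,T_{\max}); L^q(\Omega))$, classical regularity follows by a standard bootstrap: the Riesz potential $\tfrac{1}{|x|^{n-2}} * |u|^p$ becomes H\"older continuous in $x$ as soon as $u$ enjoys a Lebesgue bound, so the forcing $F(u)$ is H\"older, and Schauder interior estimates for the Dirichlet heat equation promote $u$ to a $C^{2,1}$ classical solution on $\Omega \times (0, T_{\max})$. The blow-up alternative is the usual consequence of the fact that the local existence time depends only on $\|u_0\|_q$: if $\limsup_{t \to T_{\max}^-} \|u(t)\|_q < \infty$ with $T_{\max} < \infty$, one restarts the local problem from a time close to $T_{\max}$ and extends beyond $T_{\max}$, contradicting maximality.

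The hard part will be the exponent book-keeping in the nonlinear estimate: HLS pins $|u|^p$ into a Lebesgue class of exponent strictly between $1$ and $n/2$; H\"older then determines where the product with $|u|^{p-2}u$ lands; and the heat semigroup smoothing imposes the final time-integrability constraint of order $\frac{n(p-1)}{q}$-type. The threshold $q > \frac{n}{2}(p-1)(2 - \frac{1}{p})$ should emerge as the infimum of $q$ for which all three of these constraints can be met at once by an admissible choice of intermediate exponents, while $q \geq n-1$ is the auxiliary condition ensuring the bounded-domain embeddings work in the right direction.
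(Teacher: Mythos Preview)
The paper does not prove this statement at all: Theorem~\ref{thm:ste} is explicitly quoted as ``Theorem 6 and 7 in \cite{LiuB2014}'' and is invoked only as a black box (for instance, at the end of the proof of Theorem~\ref{thm:3.1}). So there is no in-paper proof to compare your proposal against.

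That said, your sketch is the standard and correct route for results of this type, and it is essentially the method used in the cited reference: recast (\ref{eq:pro}) as a Duhamel integral via the Dirichlet heat semigroup, control the nonlocal nonlinearity $F(u)=v(u)|u|^{p-2}u$ by combining Hardy--Littlewood--Sobolev with H\"older to land in some $L^m$, and then use the $L^m\to L^q$ smoothing of $S(t)$ with an integrable time singularity to close a contraction on $C([0,T];L^q)$. The exponent conditions $q\geq n-1$ and $q>\tfrac{n}{2}(p-1)(2-\tfrac{1}{p})$ are precisely what make the HLS/H\"older/smoothing bookkeeping consistent, and the blow-up alternative follows from the fact that the local existence time depends only on $\|u_0\|_q$. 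Your outline is therefore an accurate reconstruction of the argument behind the cited theorem, even though the present paper itself supplies no proof.
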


There are two natural functionals on $H_0^1(\Omega)$ associated with the problem (\ref{eq:pro}), the energy functional and the Nehari functional, defined respectively by
$$
J(u)=\frac{1}{2}\int_\Omega |\nabla u|^2dx-\frac{1}{2p}\int_{\Omega\times \Omega}\frac{|u(y)|^p|u(x)|^p}{|x-y|^{n-2}}dxdy,
$$
$$
I(u(t))=(J'(u),u)=\int_\Omega |\nabla u|^2dx-\int_{\Omega}\left(\frac{1}{|x|^{n-2}}*|u|^p\right)|u|^pdx,
$$

Then along the flow generated by (\ref{eq:pro}), we have
\begin{equation}
	\label{eq:decj}
	\frac{d}{dt}J(u(t))=(J'(u),u_t)=-||u_t||_2^2\leq 0.
\end{equation}

The Nehari manifold is defined by
	\begin{equation}
	\label{def:N}
	N:=\{u\in H_0^1(\Omega)|I(u)=0,u\neq 0\}.
	\end{equation}
The depth of the potential well is
	\begin{equation}
		\label{def:d}
		d:=\inf_{u\in N}\{J(u)\}.
	\end{equation}

By using the potential well method, Liu and Ma  \cite{LiuB2014} proved that 
for low energy solutions ($J(u_0)<d$) the maximum existence time is totally determined  by the Nehari functional $I(u_0)$.
More precisely, if $J(u_0)<d$ and $I(u_0)>0$ then the solution exists globally, if $J(u_0)<d$ and $I(u_0)<0$ then the solution blows up in finite time.

This paper devoted to continue the study of \cite{LiuB2014}. 
The first result of the present paper deals with the solution start with initial data which has low initial energy. We found the vacuum isolating phenomenon, by using the family of potential wells \cite{Liuya,Xu2013JFA}.

Let $\delta >0$. Define
\begin{gather*}
I_\delta (u) :=\delta||\nabla u||^2-\int_\Omega v(u)|u(x)|^pdx ,\\
N_\delta :=\left\{u \in H_0^1(\Omega)|I_\delta (u)=0,||\nabla u|| \neq 0\right\}, \quad d(\delta) =  \inf_{u \in N_\delta}{J(u)}.
\end{gather*}

\begin{Thm}
	\label{thm:vacuum}
	Let $e\in (0,d)$. Suppose  $\delta_1,\delta_2$ are the two roots of $d(\delta)=e$.Then for all solutions of problem (\ref{eq:pro}) with $J(u_0)\leq e$, there is 
	a vacuum region
	$$ U_e=\bigcup_{\delta_1<\delta<\delta_2}N_\delta = \left \{ u\in H_0^1(\Omega) \mid I_\delta(u)=0, u\neq 0,\delta_1<\delta<\delta_2\right \},$$
	such that there is no any solution of problem (\ref{eq:pro}) in $U_e$.
	\end{Thm}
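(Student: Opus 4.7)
The plan is to combine the energy dissipation identity \eqref{eq:decj} with a quantitative description of the depth function $\delta\mapsto d(\delta)$. The target statement is that $d(\delta)>e$ strictly for every $\delta\in(\delta_1,\delta_2)$, so that the monotonicity $J(u(t))\le J(u_0)\le e$ will forbid any trajectory of \eqref{eq:pro} from touching $\bigcup_{\delta_1<\delta<\delta_2}N_\delta$.

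The central preliminary step is to pin down the shape of $d(\delta)$. I would proceed by fibering: for each $u\in H_0^1(\Omega)$ with $\|\nabla u\|\neq 0$, the subcritical range $1<p<(n+2)/(n-2)$ together with the Hardy--Littlewood--Sobolev inequality guarantees that $\int_\Omega v(u)|u|^p\,dx$ is finite and strictly positive, and hence that $I_\delta(\lambda u)=\delta\lambda^2\|\nabla u\|^2-\lambda^{2p}\int_\Omega v(u)|u|^p\,dx$ has a unique positive root
\[
\lambda_\delta^\star(u)=\biggl(\frac{\delta\,\|\nabla u\|^2}{\int_\Omega v(u)|u|^p\,dx}\biggr)^{1/(2p-2)},
\]
with $\lambda_\delta^\star(u)\,u\in N_\delta$. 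Using $I_\delta(w)=0$ to eliminate the nonlocal term one obtains $J(w)=\tfrac{p-\delta}{2p}\|\nabla w\|^2$ for every $w\in N_\delta$; plugging $w=\lambda_\delta^\star(u)\,u$ and factoring out $\delta$ should produce the closed form
\[
d(\delta)=\frac{p-\delta}{p-1}\,\delta^{1/(p-1)}\,d,\qquad 0<\delta\le p.
\]
From this expression one reads off the needed properties: $d(\cdot)$ extends continuously to $[0,p]$ with $d(0)=d(p)=0$, is strictly positive in between, strictly increasing on $(0,1]$ and strictly decreasing on $[1,p]$, with unique maximum $d(1)=d$.

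Granted this bell-shaped profile, for any $e\in(0,d)$ the equation $d(\delta)=e$ has exactly two roots $\delta_1\in(0,1)$ and $\delta_2\in(1,p)$, and $d(\delta)>e$ strictly on $(\delta_1,\delta_2)$. The theorem then follows by contradiction: if some solution $u$ of \eqref{eq:pro} with $J(u_0)\le e$ entered $U_e$ at some $t_0\in[0,T_{\max})$, then $u(t_0)\in N_\delta$ for some $\delta\in(\delta_1,\delta_2)$, and the variational definition of $d(\delta)$ would yield $J(u(t_0))\ge d(\delta)>e$, contradicting the energy inequality $J(u(t_0))\le J(u_0)\le e$ guaranteed by \eqref{eq:decj}.

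The hardest point will be confirming the closed-form expression for $d(\delta)$, in particular that the fibre map $u\mapsto \lambda_\delta^\star(u)\,u$ indeed parameterises $N_\delta$ modulo trivial dilations and that the infimum defining $d(\delta)$ factors as $\delta^{1/(p-1)}$ times a $\delta$-independent strictly positive constant. This requires a careful use of Hardy--Littlewood--Sobolev together with the subcritical Sobolev embedding to keep the Riesz-type double integral under control; once this is checked, the remainder is formal and parallels the Liu--Zhao scheme for the local problem \eqref{pro:sca}.
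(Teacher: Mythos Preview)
Your proposal is correct and follows essentially the same route as the paper: the paper establishes the closed form for $d(\delta)$ via the fibering map (Lemma~\ref{lem:2.2}), records the bell-shaped profile with maximum $d(1)=d$ (Lemma~\ref{lem:2.3}), and then runs the identical contradiction argument using \eqref{eq:decj} and the definition of $d(\delta)$. Your expression $d(\delta)=\frac{p-\delta}{p-1}\,\delta^{1/(p-1)}\,d$ agrees with the paper's formula \eqref{eq:2.4} after substituting $d=\bigl(\tfrac12-\tfrac{1}{2p}\bigr){C^*}^{-1/(p-1)}$.
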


Then we study the critical initial energy case and obtain the threshold just like the low initial energy solution.

 \begin{Thm}
	\label{thm:3.1}
	Let $\Omega$ be a smooth bounded convex domain in  $\mathbb{R}^n \ (n=3\,\textrm{or}\,4)$. Assume $1<p<\frac{n+2}{n-2}$, such that $(p-1)(2-\frac{1}{p})<\frac{4}{n-2}$. If $u_0\in C(\bar \Omega)\cap H_0^1(\Omega)$ and $J(u_0)=d,I(u_0)>0$, then problem (\ref{eq:pro}) admits a global solution $u(t)$ for $0<t<\infty$.
\end{Thm}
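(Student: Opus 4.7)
The plan is to invoke the local-in-time existence result of Theorem~\ref{thm:ste} and then show that the solution remains in the stable set $\{u\in H_0^1(\Omega):I(u)>0\}\cup\{0\}$ on its entire maximal interval, from which a uniform $H_0^1$-bound and hence global existence will follow. The hypothesis $(p-1)(2-\tfrac{1}{p})<4/(n-2)$ allows one to choose an exponent $q$ with $q\ge n-1$, $q>\tfrac{n}{2}(p-1)(2-\tfrac{1}{p})$ and $q\le \tfrac{2n}{n-2}$ (an elementary check for $n=3,4$); for such $q$, Theorem~\ref{thm:ste} applies while $q$ also lies within the Sobolev range $H_0^1(\Omega)\hookrightarrow L^q(\Omega)$. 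Since $u_0\in C(\bar\Omega)\cap H_0^1(\Omega)\subset L^q(\Omega)$, I obtain a unique classical solution $u$ on a maximal interval $[0,T_{\max})$, continuous in $t$ with values in $H_0^1(\Omega)$.

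Suppose, toward contradiction, that some first time $t_1\in(0,T_{\max})$ satisfies $u(t_1)\neq 0$ and $I(u(t_1))=0$. By the definitions of $N$ and $d$, $J(u(t_1))\ge d$. On the other hand, integrating (\ref{eq:decj}) gives
\[
J(u(t_1)) + \int_0^{t_1}\|u_t\|_2^2\,ds \;=\; J(u_0) \;=\; d,
\]
so $J(u(t_1))\le d$. Equality throughout forces $u_t\equiv 0$ on $[0,t_1]$, whence $u(t_1)=u_0$ and therefore $I(u_0)=0$, contradicting the hypothesis $I(u_0)>0$. Consequently $I(u(t))>0$ (or $u(t)=0$) for every $t\in[0,T_{\max})$. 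The algebraic decomposition $J(u)=\tfrac{p-1}{2p}\|\nabla u\|^2+\tfrac{1}{2p}I(u)$, combined with $J(u(t))\le d$ and $I(u(t))\ge 0$, yields the uniform gradient bound
\[
\|\nabla u(t)\|^2 \le \frac{2pd}{p-1}\qquad\text{on }[0,T_{\max}).
\]
Sobolev embedding then supplies a uniform bound for $\|u(t)\|_q$, ruling out the blow-up alternative $\lim_{t\to T_{\max}}\|u(t)\|_q=+\infty$ in Theorem~\ref{thm:ste} and forcing $T_{\max}=+\infty$.

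The delicate step is the invariance claim at the critical level $J(u_0)=d$: one cannot use the simple open-ness argument available when $J(u_0)<d$, since there is no margin in the energy inequality preventing the solution from touching the Nehari manifold. The key ingredient is the \textbf{strict} energy identity with dissipation term $\int_0^t\|u_t\|_2^2\,ds$, which collapses to zero only if the solution is stationary --- incompatible with $I(u_0)>0$. Classical regularity (ensured by $u_0\in C(\bar\Omega)\cap H_0^1(\Omega)$ together with the subcritical growth condition $(p-1)(2-\tfrac{1}{p})<4/(n-2)$) is what makes this argument rigorous, because it guarantees that $J(u(t))$ and $I(u(t))$ are continuous in $t$ and that (\ref{eq:decj}) holds as a genuine equality rather than merely an inequality.
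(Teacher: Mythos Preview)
Your proof is correct and follows essentially the same line as the paper's: show $I(u(t))>0$ on $[0,T_{\max})$ by a first-crossing-time contradiction against the strict energy dissipation, derive the uniform $H_0^1$ bound from $J(u)=\tfrac{p-1}{2p}\|\nabla u\|^2+\tfrac{1}{2p}I(u)$, and convert this via Sobolev embedding into an $L^q$ bound that excludes the blow-up alternative in Theorem~\ref{thm:ste}. The only cosmetic difference is how the strict inequality $\int_0^{t_1}\|u_t\|_2^2\,ds>0$ is obtained: you argue that $u_t\equiv 0$ would force $u(t_1)=u_0$ and hence $I(u_0)=0$, while the paper observes that $\int_\Omega u u_t\,dx=-I(u)<0$ on $(0,t_1)$ already forces $u_t\not\equiv 0$.
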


 \begin{Thm}
	\label{thm:3.2}
	Let $\Omega$ be a smooth bounded convex domain and $1<p<\frac{n+2}{n-2} \  (n\geq3)$. If $u_0\in C(\bar\Omega)\cap H_0^1(\Omega)$ and $J(u_0)=d,I(u_0)<0$, then the solution of problem (\ref{eq:pro}) blows up in finite time. 
\end{Thm}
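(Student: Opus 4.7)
\textbf{Proof plan for Theorem \ref{thm:3.2}.}
My strategy is to reduce the critical energy case $J(u_0)=d$ to the subcritical case $J(u_0)<d$ already handled by Liu and Ma in \cite{LiuB2014}. Concretely, I will produce a time $t_1>0$ at which $J(u(t_1))<d$ and $I(u(t_1))<0$, and then restart the initial--boundary value problem at time $t_1$; the subcritical blow-up theorem of \cite{LiuB2014}, together with the uniqueness supplied by Theorem~\ref{thm:ste}, then forces the original solution $u$ to blow up in finite time.

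The first step is to produce a short time interval on which the Nehari functional stays negative. Under the regularity $u_0\in C(\bar\Omega)\cap H_0^1(\Omega)$, the solution from Theorem~\ref{thm:ste} is classical; in particular $t\mapsto u(t)$ is continuous into $H_0^1(\Omega)$ near $t=0$, and hence $t\mapsto I(u(t))$ is continuous at $t=0$. The Dirichlet term $\|\nabla u(t)\|^2$ is trivially continuous, while the nonlocal term $\int_\Omega(|x|^{2-n}*|u|^p)|u|^p\,dx$ is continuous via the Hardy--Littlewood--Sobolev inequality combined with the Sobolev embedding $H_0^1(\Omega)\hookrightarrow L^{2p}(\Omega)$ guaranteed by $1<p<(n+2)/(n-2)$. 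Since $I(u_0)<0$, there is $\tau>0$ with $I(u(t))<0$ for all $t\in[0,\tau)$.

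The second step is to show that $J$ must drop strictly below $d$ somewhere on $(0,\tau)$. The energy identity (\ref{eq:decj}), $\frac{d}{dt}J(u(t))=-\|u_t(t)\|_2^2\le 0$, makes $J(u(\cdot))$ non-increasing. If $J(u(t))\equiv d$ on $[0,\tau]$, integrating the identity yields $\int_0^\tau\|u_t\|_2^2\,dt=0$, so $u_t\equiv 0$ and $u(t)\equiv u_0$ on $[0,\tau]$. Plugging into the PDE in (\ref{eq:pro}) gives $\Delta u_0+(|x|^{2-n}*|u_0|^p)|u_0|^{p-2}u_0\equiv 0$ in $\Omega$, and the $L^2$ pairing with $u_0$ then produces $I(u_0)=0$, contradicting the hypothesis $I(u_0)<0$. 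Hence there exists $t_1\in(0,\tau)$ with $J(u(t_1))<d$, and by the first step $I(u(t_1))<0$. Restarting problem (\ref{eq:pro}) from $u(\cdot,t_1)\in C(\bar\Omega)\cap H_0^1(\Omega)$ and invoking the subcritical blow-up result of \cite{LiuB2014} concludes the proof.

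The step requiring the most care is justifying the $H_0^1$-continuity of $t\mapsto u(t)$ at $t=0$, which is what makes $t\mapsto I(u(t))$ continuous and hence delivers the interval on which $I(u(\cdot))<0$. This is a mild regularity assertion but it sits just beyond the $L^q$ framework of Theorem~\ref{thm:ste} and must be extracted by standard energy estimates on the classical solution (using multiplication by $u$ and $u_t$ and the Hardy--Littlewood--Sobolev bound on the nonlocal term) rather than being quoted directly; once it is in hand, the remainder of the argument is soft.
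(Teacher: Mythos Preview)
Your proposal is correct and takes a genuinely different route from the paper. The paper does not reduce to the subcritical theorem of \cite{LiuB2014}; instead it first proves directly that $I(u(t))<0$ for \emph{all} $t\in[0,T_{\max})$ (via the argument that a first zero $t_0$ would force $u(t_0)\in N$ and hence $J(u(t_0))\ge d$, while the strict energy drop gives $J(u(t_0))<d$), and then reruns Levine's concavity method on $M(t)=\tfrac12\int_0^t\|u\|^2$, using the family $I_\delta$ and Lemma~\ref{lem:2.5} to obtain a uniform positive lower bound on $M''$. Your reduction is more economical: once you manufacture a single time $t_1$ with $J(u(t_1))<d$ and $I(u(t_1))<0$, the subcritical blow-up result closes the argument immediately, and your steady-state contradiction (if $J\equiv d$ then $u_t\equiv0$, hence $I(u_0)=0$) is a clean way to force the strict energy drop. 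What the paper's longer approach buys is the global invariance statement $I(u(t))<0$ on $[0,T_{\max})$, which is exactly what is quoted later in Remark~\ref{rk:3.3} and in Case~2 of the proof of Theorem~\ref{thm:4.3}; your route proves Theorem~\ref{thm:3.2} as stated but does not by itself deliver that invariance. Your caveat about $H_0^1$-continuity of $t\mapsto u(t)$ at $t=0$ is well placed; note that the paper's proof relies on the same fact implicitly when it integrates (\ref{eq:decj}) down to $t=0$.
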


After that, for the low initial energy and critical initial energy solution of  (\ref{eq:pro}) i.e. $J(u_0)\leq d$, we study the asymptotic behavior.
\begin{Thm}
	\label{thm:4.1}
	Let $\Omega$ be a smooth bounded convex domain in  $\mathbb{R}^n \ (n=3\,\textrm{or}\,4)$. Assume $1<p<\frac{n+2}{n-2}$, such that $(p-1)(2-\frac{1}{p})<\frac{4}{n-2}$. If $u_0\in C(\bar \Omega)\cap H_0^1(\Omega)$ satisfies $J(u_0)\leq  d$ and $I(u_0)>0$, then for the global solution $u(t)$ of problem (\ref{eq:pro}) decays to $0$ exponentially as $t\to \infty$.
\end{Thm}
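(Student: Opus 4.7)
The natural strategy is to combine invariance of an appropriate potential well with a first-order differential inequality for $\|u(t)\|_2^2$. Testing \eqref{eq:pro} against $u$ and integrating by parts yields
\[
\tfrac{1}{2}\tfrac{d}{dt}\|u(t)\|_2^2 = -I(u(t)),
\]
so exponential $L^2$-decay will follow from Gronwall's inequality as soon as one can bound $I(u(t))$ from below by a positive multiple of $\|u(t)\|_2^2$, uniformly in $t\geq 0$.

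For the subcritical case $J(u_0)<d$, the plan is to use the family of potential wells introduced just before Theorem \ref{thm:vacuum}. Since $d(\delta)$ attains its maximum $d$ uniquely at $\delta=1$, there are two roots $\delta_1<1<\delta_2$ of $d(\delta)=J(u_0)$, and I can then choose $\delta^*\in(\delta_1,1)$ close enough to $1$ so that the continuous dependence of $I_\delta(u_0)$ on $\delta$ converts the hypothesis $I(u_0)>0$ into $I_{\delta^*}(u_0)>0$, while still ensuring $J(u_0)<d(\delta^*)$. The key invariance claim is that
\[
W_{\delta^*} := \left\{u\in H_0^1(\Omega) : J(u)<d(\delta^*),\ I_{\delta^*}(u)>0\right\}\cup\{0\}
\]
is forward invariant. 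Together with \eqref{eq:decj}, this reduces to excluding a first time $t_0>0$ at which $I_{\delta^*}(u(t_0))=0$ with $u(t_0)\neq 0$: such a $u(t_0)$ would lie in $N_{\delta^*}$ and force $J(u(t_0))\geq d(\delta^*)>J(u_0)\geq J(u(t_0))$, a contradiction. Once invariance is established, the definition of $I_{\delta^*}$ and Poincar\'e's inequality yield
\[
I(u(t)) \geq (1-\delta^*)\|\nabla u(t)\|^2 \geq (1-\delta^*)\lambda_1\|u(t)\|_2^2,
\]
with $\lambda_1$ the first Dirichlet eigenvalue of $-\Delta$ on $\Omega$, and Gronwall's inequality then produces the exponential decay rate $2(1-\delta^*)\lambda_1$.

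The main obstacle is the critical case $J(u_0)=d$, in which no admissible $\delta^*$ with $J(u_0)<d(\delta^*)$ is available a priori. The plan is to show that the flow leaves the critical level instantaneously: since $I(u_0)>0$, the initial datum cannot satisfy the stationary version of \eqref{eq:pro} (otherwise testing against $u_0$ would give $I(u_0)=0$), so $u_t(\cdot,0)\not\equiv 0$, and \eqref{eq:decj} forces $J(u(t_0))<d$ for every sufficiently small $t_0>0$. Using the $H_0^1$-continuity of the solution provided by the hypotheses of Theorem \ref{thm:3.1}, the functional $I(u(\cdot))$ is continuous at $0$, so $I(u(t_0))>0$ as well. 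Applying the subcritical argument of the previous paragraph starting from $t_0$ yields exponential decay of $\|u(t)\|_2^2$ on $[t_0,\infty)$, and the finite interval $[0,t_0]$ absorbs only a constant factor. The points that will require the most care are the continuity of $I_\delta(u_0)$ in $\delta$ used to select $\delta^*$, and verifying that the regularity afforded by Theorem \ref{thm:3.1} is strong enough to justify both the differential energy identity and the continuity of $I(u(\cdot))$ at $t=0$ in the critical-energy setting.
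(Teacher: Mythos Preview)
Your proposal is correct and follows essentially the same route as the paper: split into the cases $J(u_0)<d$ and $J(u_0)=d$, in the subcritical case fix $\delta^*\in(\delta_1,1)$ and use invariance of the $\delta^*$-well to get $I(u)\geq(1-\delta^*)\|\nabla u\|^2$, then conclude by Poincar\'e and Gronwall; in the critical case, drop to a strictly lower energy level at some $t_0>0$ and restart. The only cosmetic difference is that the paper obtains $I_{\delta}(u(t))>0$ for all $\delta\in(\delta_1,\delta_2)$ by quoting Lemma~\ref{lem:2.5} together with an invariance result from \cite{LiuB2014}, whereas you prove the invariance of $W_{\delta^*}$ directly; your concern about continuity of $I_\delta(u_0)$ in $\delta$ is harmless (it is affine in $\delta$) and is exactly what underlies Lemma~\ref{lem:2.5}.
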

 \begin{Thm}
	\label{thm:4.3}
	Let $\Omega$ be a smooth bounded convex domain and $1<p<\frac{n+2}{n-2} \  (n\geq 3)$. If $u_0\in C(\bar\Omega)\cap H_0^1(\Omega)$ satisfies $J(u_0)\leq  d$ and $I(u_0)<0$, then the corresponding solution of problem (\ref{eq:pro}) grows as an exponential function in $L^{\frac{2n}{n-2}}(\Omega)$ norm.
\end{Thm}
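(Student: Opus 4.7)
The plan is to combine the invariant-set analysis for the family of potential wells from the proof of Theorem~\ref{thm:vacuum} with a Grönwall-type estimate for $\|u(t)\|_2^2$, then transfer to the $L^{2n/(n-2)}$ norm by Hölder's inequality.

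First, I would check that $I(u(t))<0$ throughout $[0,T_{\max})$. For $J(u_0)<d$ this is the standard Nehari-manifold continuity argument: a first zero $I(u(t^*))=0$ with $u(t^*)\neq 0$ would force $u(t^*)\in N$ and hence $J(u(t^*))\ge d$, contradicting $J(u(t^*))\le J(u_0)<d$. For $J(u_0)=d$, note $I(u_0)<0$ rules out $u_0\in N\cup\{0\}$, so $u_t\not\equiv 0$ on any initial subinterval; \eqref{eq:decj} then forces $J(u(t_0))<d$ for every small $t_0>0$, and the sub-critical argument applies from time $t_0$ onward.

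Next, fix $t_0\ge 0$ with $e_0:=J(u(t_0))\in(0,d)$ and let $\delta>1$ be the larger of the two roots of $d(\delta)=e_0$ supplied by Theorem~\ref{thm:vacuum}. Running the analogous invariant-set argument for the family $\{I_\delta,N_\delta,d(\delta)\}$ shows $I_\delta(u(t))<0$, equivalently $A(u(t))>\delta\|\nabla u(t)\|^2$, for every $t\ge t_0$, where $A(u):=\int_{\Omega\times\Omega}|x-y|^{2-n}|u(y)|^p|u(x)|^p\,dx\,dy$. Combining this with the energy identity $\tfrac{d}{dt}\|u\|_2^2=-2I(u)=2(A(u)-\|\nabla u\|^2)$ and Poincaré's inequality $\|\nabla u\|^2\ge\lambda_1\|u\|_2^2$, with $\lambda_1>0$ the first Dirichlet eigenvalue of $-\Delta$ on $\Omega$, yields
$$\frac{d}{dt}\|u(t)\|_2^2\ge 2(\delta-1)\lambda_1\|u(t)\|_2^2.$$
Integrating from $t_0$ gives $\|u(t)\|_2^2\ge\|u(t_0)\|_2^2\,e^{2(\delta-1)\lambda_1(t-t_0)}$, and the Hölder bound $\|u\|_2\le|\Omega|^{1/n}\|u\|_{2n/(n-2)}$ then transfers this exponential growth to the desired $L^{2n/(n-2)}$ norm.

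The main obstacle is the invariance of $\{I_\delta<0\}$ under the flow for the chosen $\delta>1$, especially at the critical initial energy $J(u_0)=d$; resolving it requires the strict dissipation of $J$ along non-stationary trajectories together with the two-root structure of $d(\delta)=e$ from Theorem~\ref{thm:vacuum}, and arranging $J(u(t_0))\in(0,d)$ via a sufficiently small choice of $t_0$.
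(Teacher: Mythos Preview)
Your argument is essentially correct and is genuinely different from the paper's proof of Theorem~\ref{thm:4.3}. The paper does \emph{not} use the family $I_\delta$ here; instead it invokes Lemma~\ref{lem:4.2} to get a uniform gradient lower bound $\|\nabla u(t)\|\ge\alpha_2>\alpha_1$, introduces the auxiliary functional $L(t)=d-J(u(t))+\tfrac12\|u(t)\|_2^2$, and shows $L'\ge C_3 L$ via an algebraic splitting of $(p-1)\|\nabla u\|^2$ that exploits the identity $2pd=(p-1)\alpha_1^2$; the passage to the $L^{2n/(n-2)}$ norm then comes from the upper bound $L(t)\le d+C\|u\|_{2n/(n-2)}^{2p}$. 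Your route---running Lemma~\ref{lem:2.5} at a level $e_0\in(0,d)$ to force $I_{\delta_0}(u)<0$ for some $\delta_0>1$, and then reading off $\tfrac{d}{dt}\|u\|_2^2\ge 2(\delta_0-1)\lambda_1\|u\|_2^2$---is exactly the mirror image of the paper's proof of the \emph{decay} result, Theorem~\ref{thm:4.1}, and is arguably more direct: it bypasses Lemma~\ref{lem:4.2} and the $L$-functional entirely. What the paper's route buys is that the growth rate $C_3$ is expressed through the geometric quantities $\alpha_1,\alpha_2$ rather than through the implicit root $\delta_2$.

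Two small technical points to clean up. First, you should take $\delta_0\in(1,\delta_2)$ strictly, not $\delta_0=\delta_2$: Lemma~\ref{lem:2.5} only guarantees sign invariance of $I_\delta$ on the \emph{open} interval $(\delta_1,\delta_2)$. Second, your sentence ``fix $t_0\ge0$ with $e_0:=J(u(t_0))\in(0,d)$'' tacitly assumes $J(u_0)>0$ in the subcritical case; if $J(u_0)\le 0$ no such $t_0$ exists. The fix is immediate: in that case pick any $e_0\in(0,d)$, note $J(u(t))\le J(u_0)\le 0<e_0$ for all $t$, and Lemma~\ref{lem:2.5} still applies. With these adjustments your proof is complete.
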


The reminder of this paper is organized as follows. In the next section, we give some preliminaries about the family of potential wells, after which we discuss the vacuum isolating of solutions for (\ref{eq:pro}). In Section \ref{sec:3}, we establish the threshold for global solutions and finite time blow up solutions of (\ref{eq:pro}) at the critical initial energy level. At last, the asymptotic behavior will be discussed in Section \ref{sec:4}.

Throughout the paper, we denote $v(u)=\frac{1}{|x|^{n-2}}*|u|^p$, $||\cdot ||_p=||\cdot ||_{L^p(\Omega)}$, $||\cdot ||=||\cdot ||_2$ and denote the maximal existence time by $T_{max}$. 

\section{Vacuum Isolating}\label{sec:2}
In this section, we shall introduce a family of Nehari functionals $I_\delta(u)$ in spcace $H_0^1(\Omega)$ and give the corresponding lemmas, which will help us to demonstrate  the vacuum isolating behavior of (\ref{eq:pro}).

\begin{Lem}
\label{lem:2.1}
Let $1<p<\frac{n+2}{n-2}$ and $\delta>0$. Then is a contant $C_{n, p,\Omega, \delta}>0$ satisfies that $||\nabla u||\geq C_{n, p,\Omega, \delta}$ for all $u\in H_0^1(\Omega)\backslash\{0\}$ and $I_\delta(u)\leq 0$.
\end{Lem}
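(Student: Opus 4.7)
The plan is to estimate the nonlocal term $\int_\Omega v(u)|u|^p\,dx$ from above by a power of $\|\nabla u\|$, and then combine with the hypothesis $I_\delta(u)\leq 0$ to solve for a lower bound on $\|\nabla u\|$.

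First I would rewrite the nonlocal term as the double integral
\[
\int_\Omega v(u)|u(x)|^p\,dx = \int_{\Omega\times\Omega}\frac{|u(y)|^p|u(x)|^p}{|x-y|^{n-2}}\,dx\,dy
\]
and apply the Hardy--Littlewood--Sobolev inequality with kernel exponent $\lambda=n-2$. With $f=g=|u|^p$, the balance condition $\tfrac{1}{r}+\tfrac{1}{s}+\tfrac{n-2}{n}=2$ with $r=s$ forces $r=\tfrac{2n}{n+2}$, giving
\[
\int_{\Omega\times\Omega}\frac{|u(y)|^p|u(x)|^p}{|x-y|^{n-2}}\,dx\,dy \leq C_{\mathrm{HLS}}\,\bigl\| |u|^p\bigr\|_{\frac{2n}{n+2}}^{2} = C_{\mathrm{HLS}}\,\|u\|_{\frac{2np}{n+2}}^{2p}.
\]

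Next I would invoke the Sobolev embedding $H_0^1(\Omega)\hookrightarrow L^{\frac{2np}{n+2}}(\Omega)$. This is legal precisely because $1<p<\frac{n+2}{n-2}$ forces $\tfrac{2np}{n+2}<\tfrac{2n}{n-2}$, and hence $\|u\|_{\frac{2np}{n+2}}\leq C_S\,\|\nabla u\|$. Combining the two inequalities yields a constant $C=C_{n,p,\Omega}>0$ with
\[
\int_\Omega v(u)|u(x)|^p\,dx \;\leq\; C\,\|\nabla u\|^{2p}.
\]

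Finally, the assumption $I_\delta(u)\leq 0$ reads $\delta\|\nabla u\|^2\leq \int_\Omega v(u)|u|^p\,dx\leq C\|\nabla u\|^{2p}$. Since $u\neq 0$, the quantity $\|\nabla u\|$ is strictly positive, so dividing through gives
\[
\|\nabla u\|^{2p-2}\geq \frac{\delta}{C}, \qquad \text{i.e.,}\qquad \|\nabla u\|\geq \Bigl(\tfrac{\delta}{C}\Bigr)^{1/(2p-2)} =: C_{n,p,\Omega,\delta},
\]
which is the desired bound (the exponent is positive because $p>1$). The only mildly delicate point is verifying that the HLS exponents and Sobolev critical exponent line up correctly under the subcritical hypothesis $p<\tfrac{n+2}{n-2}$; once that is observed, the rest is a one-line algebraic inversion.
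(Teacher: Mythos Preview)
Your proof is correct and follows essentially the same route as the paper: Hardy--Littlewood--Sobolev with exponent $\tfrac{2n}{n+2}$, then the Sobolev embedding $H_0^1(\Omega)\hookrightarrow L^{2np/(n+2)}(\Omega)$ (valid since $p<\tfrac{n+2}{n-2}$), and finally the algebraic inversion using $I_\delta(u)\leq 0$ and $p>1$. The only cosmetic difference is that the paper inserts an explicit H\"older step $\|u\|_{2np/(n+2)}\leq C\|u\|_{2n/(n-2)}$ before the critical Sobolev inequality, whereas you invoke the subcritical embedding directly; these are equivalent.
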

\begin{proof}
 Provided that $I_\delta(u)\leq 0$, applying the classical Hardy-Littlewood-Sobolev inequality we have
\begin{equation}
\label{eq:2.1}
\delta ||\nabla u||^2\leq \int_{\Omega}v(u)|u|^{p}dx=\int_{\Omega\times \Omega}\frac{|u(y)|^p|u(x)|^p}{|x-y|^{n-2}}dxdy \leq C||u||_{2np/(n+2)}^{2p}.
\end{equation}
Notice that $\frac{2np}{n+2}<\frac{2n}{n-2}$ due to $1<p<\frac{n+2}{n-2}$. By using H\"older inequality and Sobolev inequality we obtain
\begin{equation}
\label{eq:2.2}
||u||_{2np/(n+2)}\leq C_{n,p,\Omega}||u||_{2n/(n-2)}\leq C_{n,p,\Omega}||\nabla u||.
\end{equation}
Combining (\ref{eq:2.1}) and (\ref{eq:2.2}), one has $\delta ||\nabla u||^2 \leq C_{n,p,\Omega}||\nabla u||^{2p}$ i.e. $||\nabla u||^2\geq (\delta/C_{n,p,\Omega})^{\frac{1}{2p-2}}= C_{n, p, \Omega, \delta} $.
\end{proof}

\begin{Lem}
\label{lem:2.2} 
Let 
\begin{equation}
\label{eq:2.3}
C^{*}=\sup_{u\in H_0^1(\Omega),||\nabla u||\neq 0}\frac{ \int_\Omega v(u)|u|^pdx}{||\nabla u||^{2p}}.
\end{equation}
Then
\begin{equation}
\label{eq:2.4}
d(\delta) = \inf_{u \in N_\delta}J(u)=\left( \frac{1}{2}\delta^{\frac{1}{p-1}}-\frac{1}{2p}\delta^{\frac{p}{p-1}} \right){C^*}^{-\frac{1}{p - 1}}.
\end{equation}
\end{Lem}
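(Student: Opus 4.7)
The strategy is to reduce the constrained minimization of $J$ over $N_\delta$ to a one-dimensional optimization, by exploiting the Nehari identity on $N_\delta$ and then relating the resulting infimum of $\|\nabla u\|^2$ to the constant $C^*$ defined in (\ref{eq:2.3}).

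First, on $N_\delta$ I would use the constraint $\delta\|\nabla u\|^2 = \int_\Omega v(u)|u|^p\,dx$ to eliminate the nonlocal term in $J$, obtaining
\[
J(u)\big|_{N_\delta} = \tfrac{1}{2}\|\nabla u\|^2 - \tfrac{\delta}{2p}\|\nabla u\|^2 = \tfrac{p-\delta}{2p}\|\nabla u\|^2.
\]
For $0<\delta\le p$ the prefactor is nonnegative, so $d(\delta) = \tfrac{p-\delta}{2p}\,m(\delta)$ with $m(\delta) := \inf_{u\in N_\delta}\|\nabla u\|^2$. The task therefore reduces to computing $m(\delta)$.

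For the lower bound $m(\delta)\ge (\delta/C^*)^{1/(p-1)}$, I would invoke the definition of $C^*$: any $u\in N_\delta$ satisfies $\delta\|\nabla u\|^2 = \int_\Omega v(u)|u|^p\,dx \le C^*\|\nabla u\|^{2p}$, and dividing by $\|\nabla u\|^2\neq 0$ rearranges to the bound. For the matching upper bound I would use the scaling $u\mapsto \lambda u$. Given any $u\in H_0^1(\Omega)$ with $\|\nabla u\|\neq 0$, the equation $I_\delta(\lambda u)=0$ has a unique positive root $\lambda(u)$ satisfying $\lambda(u)^{2p-2} = \delta\|\nabla u\|^2/\int_\Omega v(u)|u|^p\,dx$, and a short algebraic check yields
\[
\|\nabla(\lambda(u)u)\|^2 = \left(\frac{\delta}{g(u)}\right)^{1/(p-1)},\qquad g(u) := \frac{\int_\Omega v(u)|u|^p\,dx}{\|\nabla u\|^{2p}}.
\]
Picking a maximizing sequence $\{u_k\}$ for $C^*=\sup_u g(u)$ then produces $\lambda(u_k)u_k\in N_\delta$ with $\|\nabla(\lambda(u_k)u_k)\|^2 \to (\delta/C^*)^{1/(p-1)}$.

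Combining the two inequalities gives $m(\delta) = (\delta/C^*)^{1/(p-1)}$, and substituting back yields
\[
d(\delta) = \frac{p-\delta}{2p}\left(\frac{\delta}{C^*}\right)^{1/(p-1)} = \left(\tfrac{1}{2}\delta^{1/(p-1)} - \tfrac{1}{2p}\delta^{p/(p-1)}\right)(C^*)^{-1/(p-1)},
\]
as claimed. There is no real obstacle here: the whole argument rests on the observation that $J|_{N_\delta}$ depends on $u$ only through $\|\nabla u\|^2$, so no delicate compactness of maximizers for $C^*$ is required; the only point to watch is the sign of $p-\delta$, which restricts the natural range of $\delta$ for which the formula is meaningful.
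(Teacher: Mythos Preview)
Your proof is correct and follows essentially the same approach as the paper: both arguments rest on the fibering map $u\mapsto \lambda(\delta,u)u$ onto $N_\delta$ and reduce the optimization to the quotient $g(u)=\int_\Omega v(u)|u|^p\,dx/\|\nabla u\|^{2p}$, whose supremum is $C^*$. The only cosmetic difference is that you first eliminate the nonlocal term on $N_\delta$ to obtain $J|_{N_\delta}=\tfrac{p-\delta}{2p}\|\nabla u\|^2$ and then compute $\inf_{N_\delta}\|\nabla u\|^2$, whereas the paper evaluates $J(\lambda(\delta,u)u)$ directly and takes the infimum over $H_0^1(\Omega)\setminus\{0\}$; the algebra is identical, and your caveat about the sign of $p-\delta$ is apt (the paper also uses the formula only for $0<\delta<p$).
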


\begin{proof}
At first, by the proof of Lemma  \ref{lem:2.1}, there is $C_{n,p,\Omega}>0$ such that $\int_\Omega v(u)|u|^pdx \leq C _{n,p,\Omega}||\nabla u||^{2p}$ for all $u\in H_0^1(\Omega)\backslash \{0\}$, which ensures the existence of $C^*$.

For each $u\in H_0^1(\Omega)\backslash \{0\}$, there is a unique $\ld=\ld(\delta,u)$ so that $\ld (\delta, u)u\in N_\delta$. A simple calculation gives
\begin{equation*}
\label{eq:lamd}
\ld (\delta)=\left(\frac{\delta ||\nabla u||^2}{\int_\Omega v(u)|u|^pdx}\right)^{\frac{1}{2p-2}},
\end{equation*}
and
 $$J(\ld u)=\left( \frac{1}{2}\delta^{\frac{1}{p-1}}-\frac{1}{2p}\delta^{\frac{p}{p-1}} \right)\left(\frac{||\nabla u||^{2p}}{ \int_\Omega v(u)|u|^pdx}\right)^{\frac{1}{p-1}}.$$
Noticing that
$\inf_{u\in N_{\delta}}J(u)=\inf_{u\in H_0^1(\Omega)\backslash \{0\}} J(\lambda(\delta,u)u)$ and by using the definition of $d(\delta)$ we conclude that
 $$d(\delta) =\inf_{u \in N_\delta}  J(u)=\inf_{u \in H_0^1(\Omega)\backslash \{0\}} J(\ld(\delta, u) u)=\left( \frac{1}{2}\delta^{\frac{1}{p-1}}-\frac{1}{2p}\delta^{\frac{p}{p-1}} \right){C^*}^{-\frac{1}{p - 1}}.$$
\end{proof}

\begin{Lem}
\label{lem:2.3}
$d(\delta)$ satisfies the following properties:
\begin{enumerate}
	\item[(i)] $d(\delta)>0$ for $0<\delta<p$;
	\item[(ii)] $\lim_{\delta \to 0}d(\delta)=\lim_{\delta \to p}d(\delta)=0$;
	\item[(iii)]  $d(\delta)$ is strictly increasing on $0<\delta \leq 1$, strictly decreasing on $1\leq \delta<p$ and takes the maximum $d=d(1)$ at $\delta =1$;
	\item[(iv)] $d(\delta)$ is continuous on $0\leq \delta\leq p$.
\end{enumerate}
\end{Lem}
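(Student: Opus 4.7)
The plan is to exploit the explicit formula
\[
d(\delta)=\Bigl(\tfrac{1}{2}\delta^{\frac{1}{p-1}}-\tfrac{1}{2p}\delta^{\frac{p}{p-1}}\Bigr)(C^*)^{-\frac{1}{p-1}}
\]
from Lemma \ref{lem:2.2} and read all four properties off from elementary analysis of the real-valued function
\[
f(\delta):=\tfrac{1}{2}\delta^{\frac{1}{p-1}}-\tfrac{1}{2p}\delta^{\frac{p}{p-1}},\qquad \delta\ge 0.
\]
Since $(C^*)^{-1/(p-1)}$ is a positive constant, $d(\delta)$ has the same sign, monotonicity, continuity and critical points as $f(\delta)$.

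For (i), I would factor $f(\delta)=\tfrac{1}{2p}\delta^{\frac{1}{p-1}}(p-\delta)$, from which $f(\delta)>0$ on $(0,p)$ is immediate. For (ii), both monomials tend to $0$ as $\delta\to 0^+$; at $\delta=p$ the two terms coincide ($\tfrac{1}{2p}p^{\frac{p}{p-1}}=\tfrac12 p^{\frac{1}{p-1}}$), yielding $f(p)=0$. For (iii), I would differentiate directly:
\[
f'(\delta)=\tfrac{1}{2(p-1)}\delta^{\frac{2-p}{p-1}}-\tfrac{1}{2(p-1)}\delta^{\frac{1}{p-1}}=\tfrac{1}{2(p-1)}\delta^{\frac{2-p}{p-1}}(1-\delta),
\]
so $f'>0$ on $(0,1)$, $f'<0$ on $(1,p)$, and $f$ attains its maximum at $\delta=1$ with $f(1)=\tfrac{p-1}{2p}$. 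To identify the maximum value with $d$, I need only note that $I_{1}(u)=I(u)$, hence $N_{1}=N$, and therefore $d(1)=\inf_{u\in N}J(u)=d$. For (iv), continuity on $[0,p]$ is inherited from the continuity of the power functions $\delta\mapsto\delta^{1/(p-1)}$ and $\delta\mapsto\delta^{p/(p-1)}$ (with the natural value $0$ at $\delta=0$).

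There is no real obstacle here: once Lemma \ref{lem:2.2} has been established, all of (i)--(iv) reduce to calculus on the explicit one-variable function $f(\delta)$. The only point that requires a word of justification beyond calculus is the identification $d(1)=d$ in (iii), which follows from $I_{1}\equiv I$ and the definitions of $N$ and $N_{\delta}$.
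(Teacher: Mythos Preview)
Your proposal is correct and follows essentially the same route as the paper: both arguments invoke the explicit formula from Lemma~\ref{lem:2.2}, factor it as $d(\delta)=\tfrac{1}{2}\delta^{1/(p-1)}(C^*)^{-1/(p-1)}(1-\delta/p)$ to read off (i), (ii), (iv), and then differentiate to obtain (iii). Your added remark that $d(1)=d$ because $I_1\equiv I$ and $N_1=N$ is a small but welcome clarification that the paper leaves implicit.
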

\begin{proof}
From (\ref{eq:2.4}), $d(\delta) =\frac{1}{2}\delta^{\frac{1}{p-1}}{C^*}^{\frac{-1}{p-1}}\left(1-\frac{\delta}{p}\right)$
		which gives (i)(ii)(iv). By a straightforward calculation, we can verify $d'(1)=0$, $d'(\delta)>0$ for $0<\delta<1$ and $d'(\delta)<0$ for $1<\delta<p$, which shows (iii).

\end{proof}

\begin{Rk}
	From the above Lemma, we know that the depth of the potential well is $d=(\frac{1}{2}-\frac{1}{2p}){C^*}^{-\frac{1}{p - 1}}=\max_{\delta\in [0,p]}d(\delta)$.
\end{Rk}

 \begin{Lem}
 \label{lem:2.5}
Let $0<e<d$ and $\delta_1,\delta_2$ are two roots of equation $d(\delta)=e$. If $u\in H_0^1(\Omega)$ and $J(u)\leq  e$, then the sign of $I_\delta(u)$ remain unchanged on $(\delta_1,\delta_2)$.
 \end{Lem}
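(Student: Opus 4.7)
The plan is to exploit the fact that for each fixed $u$, the map $\delta \mapsto I_\delta(u) = \delta\|\nabla u\|^2 - \int_\Omega v(u)|u|^p\,dx$ is affine (indeed linear) in $\delta$. Hence it has at most one zero, and ``sign unchanged on $(\delta_1,\delta_2)$'' reduces to ``the unique zero, if any, does not lie in $(\delta_1,\delta_2)$''. The case $u \equiv 0$ is trivial since $I_\delta(0) \equiv 0$; so I focus on $u \neq 0$, for which $\|\nabla u\| > 0$ and $\int_\Omega v(u)|u|^p\,dx > 0$ (because $v(u) = |x|^{-(n-2)} * |u|^p > 0$ a.e.\ on $\Omega$). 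Thus the unique candidate zero is
\begin{equation*}
\delta^\ast(u) = \frac{\int_\Omega v(u)|u|^p\,dx}{\|\nabla u\|^2} > 0,
\end{equation*}
and at this value $u \in N_{\delta^\ast(u)}$.

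Next I would argue by contradiction: assume $\delta^\ast(u) \in (\delta_1,\delta_2)$. Since $u \in N_{\delta^\ast(u)}$, the very definition of $d(\delta)$ gives $J(u) \geq d(\delta^\ast(u))$. I would then invoke Lemma \ref{lem:2.3}(iii): $d$ is strictly increasing on $(0,1]$, strictly decreasing on $[1,p)$, with maximum $d(1)=d$. Because $d(\delta_1)=d(\delta_2)=e$ and $e<d$, the two roots must satisfy $\delta_1 < 1 < \delta_2$, and monotonicity forces $d(\delta) > e$ for every $\delta \in (\delta_1,\delta_2)$. In particular $d(\delta^\ast(u)) > e$, so $J(u) > e$, contradicting the hypothesis $J(u) \leq e$.

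Therefore $\delta^\ast(u) \notin (\delta_1,\delta_2)$, i.e.\ $\delta^\ast(u) \leq \delta_1$ or $\delta^\ast(u) \geq \delta_2$. Since $I_\delta(u)$ is linear in $\delta$ and vanishes only at $\delta^\ast(u)$, its sign throughout $(\delta_1,\delta_2)$ is constant: positive if $\delta^\ast(u) \leq \delta_1$ and negative if $\delta^\ast(u) \geq \delta_2$. This is precisely the stated conclusion.

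The only genuinely non-routine ingredient is the geometric observation that $e<d$ forces $\delta_1<1<\delta_2$ together with the strict inequality $d(\delta)>e$ on $(\delta_1,\delta_2)$; this is immediate from Lemma \ref{lem:2.3}(iii) but is the load-bearing step. The rest is bookkeeping about the linear structure of $I_\delta(u)$ in $\delta$ and positivity of the convolution term, so I do not anticipate any real obstacle beyond cleanly invoking Lemma \ref{lem:2.3}.
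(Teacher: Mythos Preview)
Your argument is correct and follows essentially the same route as the paper: assume a zero of $I_\delta(u)$ occurs at some $\delta_0\in(\delta_1,\delta_2)$, invoke the definition of $d(\delta_0)$ to get $J(u)\ge d(\delta_0)$, and then use Lemma~\ref{lem:2.3}(iii) to conclude $d(\delta_0)>e$, contradicting $J(u)\le e$. Your additional remarks on the affine dependence of $I_\delta(u)$ on $\delta$ and the explicit treatment of the case $u\equiv 0$ make the logic slightly more transparent, but the core idea is identical to the paper's.
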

\begin{proof}

Assume $I_\delta(u)$ change its sign on $(\delta_1,\delta_2)$, then there exists a $\delta_0$ such that $I_{\delta_0}(u)=0$, that is to say $u\in N_{\delta_0}$ and hence  $J(u)\geq d(\delta_0)$. By using Lemma 2.3, we have $J(u)\geq d(\delta_0)>d(\delta_1)=d(\delta_2)$, which contradicts to the choice of $\delta_1$ and $\delta_2$.
	
\end{proof}


 
 

We are now in a position to give the proof of Theorem \ref{thm:vacuum}.

\begin{proof}[Proof of Theorem \ref{thm:vacuum}]
Let $u(t)\ (0\leq t<T_{max})$ be the solution of problem (\ref{eq:pro}) corresponding to $u_0$.
We only need to prove that if $u_0\neq 0$ and $J(u_0)\leq e$, then for all $\delta\in (\delta_1,\delta_2)$, $u(t)\not\in N_\delta $, i.e. $I_{\delta}(u(t))\neq 0$, for all $t\in [0,T_{\max})$. 

At first, it is clear that $I_{\delta}(u_0)\neq 0$. Since if  $I_{\delta}(u_0)=0$, then $J(u_0)\geq d(\delta)>d(\delta_1)=d(\delta_2)$, which contradicts with the definition of $\delta_1$ and $\delta_2$.

 Suppose there is $t_1>0$ s.t. $u(t_1)\in U_e$. Namely, there is some $\delta\in(\delta_1,\delta_2)$ such that $u(t_1)\in N_\delta$. Since the energy functional $J(u)$ is no increasing along the flow generated by (\ref{eq:pro}), see (\ref{eq:decj}). Thus, we get $J(u_0)\geq J(u(t_1))\geq d(\delta)>J(u_0)$, which leads to a contradiction. 
\end{proof}

\section{Threshold for solutions with critical initial energy}\label{sec:3}

In this section, we deal with the critical initial energy solution.

 \begin{proof}[Proof of Theorem \ref{thm:3.1}]
We may assume that $u(t)\neq 0$ for all $t\in [0,T_{max})$. Actually, if there is 
$u(t) = 0$, then by uniqueness, $u(s) = 0$ for all $s\geq t$. Hence, the conclusion is true.

We claim that $I(u(t))>0$ for any $t\in [0,T_{\max})$. Otherwise, suppose there exists a $t_0>0$ such that $I(u(t_0))=0$, and $I(u(t))>0$ for $0<t<t_0$. Then 
\begin{equation}
	\label{eq:fc}
	J(u(t_0))\geq d=J(u_0)
\end{equation} 
due to $u(t_0)\in N$. 
On the other hand, since $I(u(t))>0$ for $0<t<t_0$ and by using the fact that $\int_{\Omega}uu_tdx=-I(u(t))$, we obtain $u_t\neq 0$ on $(0,t_0)$, which indicates $\int_0^{t_0}||u_t||^2d\tau>0$. 
Integrating equation (\ref{eq:decj}) on interval $(0,t)$, one has
\begin{equation*}
J(u(t))=J(u_0)-\int_0^t||u_t||^2d\tau<J(u_0)=d,
\end{equation*}
which contradicts to (\ref{eq:fc}).

So we have
$$
d=J(u_0)\geq J(u(t))\geq \frac{1}{2}(1-\frac{1}{p})\int_{\Omega}|\nabla u|^2dx,
$$
which indicates that
$$
\int_{\Omega}|\nabla u|^2dx\leq \frac{2p}{p-1}d.
$$
Therefore, $||u(t)||_{H_0^1(\Omega)}$ is uniformly bounded. For those $q$ satisfies $n-1\leq q\leq \frac{2n}{n-2}$ ($n=3$ or $4$), and  $\frac{n}{2}(p-1)(2-\frac{1}{p})<q<\frac{2n}{n-2}$, we have $||u(t)||_{L^q(\Omega)}$ is bounded, by using the Sobolev inequality.  Applying Theorem 6 in \cite{LiuB2014}, we know that $T_{\max}=\infty$.
 
 \end{proof}

 We shall prove Theorem \ref{thm:3.2} by using the concavity method \cite{Levine}.

 \begin{proof}[Proof of Theorem \ref{thm:3.2}]

First we prove $I(u(t))<0$ for $t\in(0,T_{max})$. Suppose it is false, then there exists a $t_0>0$ s.t. $I(u(t_0))=0$ and $I(u(t))=I_1(u(t))<0$ for $0\leq  t<t_0$.  On the one hand we have $||\nabla u(t)||\geq C_{n,p,\Omega}$ on $[0,t_0)$ by using Lemma \ref{lem:2.1}, which implies $u(t_0)\neq 0$. Thus we obtain 
\begin{equation}
\label{eq:j}
J(u(t_0))\geq d
\end{equation}
due to the fact that $u(t_0)\in N$. On the other hand, one can see $u_t\neq 0$ on $(0,t_0)$ since $\int_{\Omega}uu_tdx=-I(u)>0$, which indicates $\int_0^{t_0}||u_t||^2d\tau>0$. By a similar argument as in the  proof of Theorem 1.3, we have $J(u(t_0))=J(u_0)-\int_0^{t_0}||u_t||^2d\tau<d$, which contradicts with (\ref{eq:j}). Consequently, we have  
\begin{equation}
\label{eq:negi}
I(u(t))<0,\quad \forall t\in [0,T_{max}).
\end{equation} 
Moreover, by Lemma  \ref{lem:2.1}, there holds 
\begin{equation}
\label{eq:gradu2}
||\nabla u(t)||\geq C_{n,p,\Omega}, \quad \forall t\in [0,T_{\max}).
\end{equation}

Assume for contradiction that $T_{max}=+\infty$. Denote $M(t)=\frac12\int_0^t||u(\tau)||^2d\tau$. Then we obtain $M'(t)=\frac12||u(t)||^2>0$  and $M''(t)=\int_{\Omega}uu_tdx=-I(u(t))>0$ for $t>0$. Choose $t_1>0$ such that $$0<d_1=J(u(t_1))=J(u_0)-\int_0^{t_1}||u_t||^2d\tau=d-\int_0^{t_1}||u_t||^2d\tau<d.$$
Thus, we have $J(u(t))\leq  d_1$ for each $t\geq t_1$. It follows from (\ref{eq:negi}), Lemma \ref{lem:2.1} and Lemma \ref{lem:2.5} that $I_\delta(u(t))<0$ for  $\delta_1<\delta<\delta_2,t\geq t_1$,  where $\delta_1,\delta_2$ are two roots of equation $d(\delta)=d_1$. Thus, choosing any $\delta_0\in (1,\delta_2)$, we have $I_{\delta_0}(u(t))<0$ for all $t\geq t_1$. Taking (\ref{eq:gradu2}) into account, we find
$$M''(t)=-I(u(t))=(\delta_0-1)||\nabla u(t)||^2-I_{\delta_0}(u(t))>(\delta_0-1)C_{n,p,\Omega}>0, \forall t\geq t_1,$$
which indicates $M'(t)\to +\infty$ as $t\to+\infty$ and $M(t)\to +\infty$ as $t\to+\infty$.

Now for $t>0$, we estimate the following
\begin{align}
 M''(t)=-I(u(t))&=(p-1)||\nabla u(t)||^2-2pJ(u(t))\label{eq:3.2}\\
 &\geq 2p\int_0^t||u_t||^2d\tau+(p-1)\lambda M'(t)-2pJ(u_0) \label{eq:3.3},
\end{align}
here constant $\lambda$ satisfies $||\nabla u||^2\geq \frac{\lambda}{2}||u||^2$ which from Poincar\'e inequality. Integrating $M''(t)=\int_{\Omega}uu_tdx$ on $(0,t)$ yields
$$M'(t)-M'(0)=\int_0^t\int_{\Omega}uu_tdxd\tau.$$
Hence,
\begin{align}
(M'(t))^2&=-(M'(0))^2+2M'(t)M'(0)+\left(\int_0^t\int_{\Omega}uu_tdxd\tau\right)^2 \notag \\
&=-\frac14||u_0||^4+M'(t)||u_0||^2+\left(\int_0^t\int_{\Omega}uu_tdxd\tau\right)^2\notag \\
&\leq  M'(t)||u_0||^2+\left(\int_0^t\int_{\Omega}uu_tdxd\tau\right)^2 \label{eq:3.4}.
\end{align}
Then combining (\ref{eq:3.3}) and (\ref{eq:3.4}), we have
\begin{align}
 MM''-pM'^2 &\geq p\left[ \int_0^t||u||^2d\tau \cdot \int_0^t||u_t||^2d\tau -\left(\int_0^t\int_{\Omega}uu_tdxd\tau\right)^2\right] \notag \\
 &+(p-1)\lambda MM'-2pMJ(u_0)-pM'||u_0||^2 \notag \\
 &\geq (p-1)\lambda MM'-2pMJ(u_0)-pM'||u_0||^2 \label{eq:3.5},
 \end{align}
 where we have used Schwatz's inequality.
 Since $M(t)\to\infty$ and $M'(t)\to \infty$ as $t\to\infty$, then there exists a $t_2$ s.t. 
 $$ \frac{p-1}{2}\lambda M(t)>p||u_0||^2 ,\ \frac{p-1}{2}\lambda M'(t)>2pJ(u_0) , \, t>t_2.$$
 Hence we obtain by (\ref{eq:3.5})
 $$ M(t)M''(t)-pM'(t)^2>0, \, t>t_2.$$
Let us consider the function $M^{-p+1}(t)$. By a simple calculation we have
$$
 \frac{d^2}{dt^2}M^{-p+1}(t)=(-p+1) M^{-p-1}(t)\left(M(t)M''(t)-pM'(t)^2\right)<0 , t>t_2.
$$
 It guarantees that nonincreasing function $M^{-p+1}(t)$ is concave on $(t_2,\infty)$. Consequently, there exists a finite time $T>0$ such that $\lim_{t\to T}M^{-p+1}(t)=0$ i.e. $\lim_{t\to T}M(t)=\infty$ which contradicts the assumption that $T_{\max}=+\infty$.
 
This completes the proof. 

 \end{proof}
  
  We conclude this section by pointing out the following remark.
 \begin{Rk}
 \label{rk:3.3}
 From the proof of Theorems \ref{thm:3.1} and \ref{thm:3.2}, we can see that 
 \begin{gather*}
 W^{'}=\left\{u \in H_0^1(\Omega)|J(u)\leq  d,I(u)>0\right \}\cup \{ 0 \}\ \textrm{and}\\ 
 Z^{'}=\left\{u \in H_0^1(\Omega)|J(u)\leq  d,I(u)<0\right\},
 \end{gather*}
 are both invariant for solutions of problem (\ref{eq:pro}). Moreover, the solution has long time existence if $u_0\in W^{'}$ and the solution blows up at finite time if $u_0\in  Z^{'}$.
 \end{Rk}
  
\section{Exponential decay, exponential growth}\label{sec:4}
In this section, we shall investigate the asymptotic behavior of solutions for problem (\ref{eq:pro}) with $J(u_0)\leq  d$ and give the proof of Theorem \ref{thm:4.1} and Theorem \ref{thm:4.3}.

\begin{proof}[Proof of Theorem \ref{thm:4.1}]
We consider the following two cases. 

$\mathbf{Case \,1}$. $J(u_0)<d$.

By using
$$J(u)=\left(\frac12-\frac{1}{2p}\right)||\nabla u||^2+\frac{1}{2p}I(u),$$
we get $J(u_0)>0$. Let $\delta_1,\delta_2$ $(\delta_1<\delta_2)$ be the two roots of equation $d(\delta)=J(u_0)$.

From Proposition 10 in \cite{LiuB2014}, we know  $J(u(t))<d,I(u(t))>0$ for all $t>0$, provided $J(u_0)<d,I(u_0)>0$.  Since $J(u(t))\leq  J(u_0)<d, I(u(t))>0$ for each $t\geq 0$, we obtain $I_{\delta}(u(t))> 0$ for $\delta\in(\delta_1,\delta_2),t\geq 0$ by using Lemma \ref{lem:2.5}. Taking any $\delta_0\in(\delta_1,1)$, we have
$$\frac12\frac{d}{dt}||u||^2+I(u)=\frac12\frac{d}{dt}||u||^2+(1-\delta_0)||\nabla u||^2+I_{\delta_0}(u)=0.$$
 By applying Poincar\'e inequality, we obtain
$$\frac12\frac{d}{dt}||u||^2+(1-\delta_0)C||u||^2< 0 ,\quad t \geq 0.$$
Consequently, by using Gronwall inequality we know that
$$ ||u(t)||^2\leq || u_0||^2e^{-2C(1-\delta_0)t},\quad 0\leq  t<\infty.$$

$\mathbf{Case \,2}$. $J(u_0)=d$.

Indeed, given $J(u_0)=d,I(u_0)>0$, from the proof of Theorem \ref{thm:3.1} we can choose any fixed $t_0>0$ such that $0<J(u(t_0))<d,I(u(t_0))>0$. Let $\delta_1,\delta_2$ are two roots of equation $d(\delta)=J(u(t_0))$. Thus by a similar argument with proof of $\mathbf{Case \,1}$, we easily obtain 
$$ ||u(t)||^2\leq || u(t_0)||^2e^{-2C(1-\delta_0)t},\quad t_0\leq  t< \infty.$$
Therefore the result of theorem follows immediately.
\end{proof}

In order to prove Theorem \ref{thm:4.3}, we need the following lemma.
 \begin{Lem}
 \label{lem:4.2}
 Let $u(t)$ be a nontrival solution of problem (\ref{eq:pro}) which satisfies $J(u_0)<d, ||\nabla u_0||>\alpha_1:={C^*}^{\frac{-1}{2p-2}}$. Then there exists $\alpha_2>\alpha_1$ such that $||\nabla u(t)||\geq \alpha_2$ for all $0\leq t<T_{max}$. Here $C^*$ is defined by (\ref{eq:2.3}).
 \end{Lem}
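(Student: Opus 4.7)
The plan is to identify a one-dimensional scalar barrier function governing the evolution of $||\nabla u(t)||$ and to use the energy identity (\ref{eq:decj}) to trap $||\nabla u(t)||$ on the far side of that barrier. Define
\[
g(s) := \frac{1}{2} s^2 - \frac{C^*}{2p} s^{2p}, \qquad s \geq 0.
\]
From the definition of $C^*$ in (\ref{eq:2.3}), every $v \in H_0^1(\Omega)$ satisfies $J(v) \geq g(||\nabla v||)$. A direct calculus check shows that $g$ is strictly increasing on $[0,\alpha_1]$, strictly decreasing on $[\alpha_1,\infty)$, with $g(\alpha_1)=d$ (in agreement with the remark after Lemma \ref{lem:2.3}) and $g(s)\to -\infty$ as $s\to\infty$.

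Set $e := J(u_0) < d$ and let $\alpha_2$ be the unique root of $g(\alpha)=e$ lying in $(\alpha_1,\infty)$; such a root exists and is unique because $g$ decreases continuously from $d$ to $-\infty$ on $[\alpha_1,\infty)$. Applying $J(v)\geq g(||\nabla v||)$ with $v=u_0$, combined with the hypothesis $||\nabla u_0||>\alpha_1$ and the strict monotonicity of $g$ on $[\alpha_1,\infty)$, one obtains $||\nabla u_0||\geq \alpha_2$, so the lower bound holds at $t=0$.

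To propagate the bound, note that by (\ref{eq:decj}) we have $J(u(t))\leq J(u_0)=e=g(\alpha_2)$ for every $t\in[0,T_{\max})$, hence $g(||\nabla u(t)||)\leq e$. Suppose for contradiction that $||\nabla u(t_1)||<\alpha_2$ for some $t_1\in(0,T_{\max})$. If $||\nabla u(t_1)||\in[\alpha_1,\alpha_2)$, strict monotonicity of $g$ on $[\alpha_1,\infty)$ gives $g(||\nabla u(t_1)||)>g(\alpha_2)=e$, contradicting $g(||\nabla u(t_1)||)\leq e$. If instead $||\nabla u(t_1)||<\alpha_1$, then since $t\mapsto ||\nabla u(t)||$ is continuous and $||\nabla u_0||\geq \alpha_2>\alpha_1$, the intermediate value theorem produces $t^*\in(0,t_1)$ with $||\nabla u(t^*)||=\frac{1}{2}(\alpha_1+\alpha_2)\in(\alpha_1,\alpha_2)$, which again yields the contradiction $g(||\nabla u(t^*)||)>e\geq J(u(t^*))\geq g(||\nabla u(t^*)||)$.

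The only external ingredient I would invoke rather than re-prove is the continuity of $t\mapsto ||\nabla u(t)||$ on $[0,T_{\max})$, which follows from the classical $L^q$-solution framework of Theorem \ref{thm:ste} and standard parabolic regularity. Beyond that, the argument is pure one-variable analysis of $g$ coupled with energy monotonicity, so I do not expect a real obstacle; the only point one has to be careful about is treating separately the cases $||\nabla u(t_1)||\in[\alpha_1,\alpha_2)$ and $||\nabla u(t_1)||<\alpha_1$, since in the former case the contradiction is immediate while in the latter it requires the intermediate value step.
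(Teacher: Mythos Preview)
Your proof is correct and follows essentially the same approach as the paper: both introduce the scalar barrier $g(s)=\tfrac{1}{2}s^{2}-\tfrac{C^{*}}{2p}s^{2p}$, locate $\alpha_{2}$ as the super-critical root of $g(\alpha)=J(u_{0})$, and combine $J(u)\geq g(\|\nabla u\|)$ with energy monotonicity and continuity of $t\mapsto\|\nabla u(t)\|$ to trap the gradient norm above $\alpha_{2}$. Your explicit two-case treatment (whether $\|\nabla u(t_{1})\|$ lies in $[\alpha_{1},\alpha_{2})$ or below $\alpha_{1}$) and your statement of the contradiction via $g(\|\nabla u(t^{*})\|)>e\geq J(u(t^{*}))\geq g(\|\nabla u(t^{*})\|)$ are in fact cleaner than the paper's compressed version, which invokes continuity to land in $(\alpha_{1},\alpha_{2})$ without spelling out the cases.
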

 
 \begin{proof}
 Firstly, by the definition of $C^*$ as in (\ref{eq:2.3}), we estimate
 \begin{equation}
 \label{eq:4.1}
 J(u)=\frac12||\nabla u||^2-\frac{1}{2p}\int_{\Omega}v(u)|u|^pdx\geq\frac12||\nabla u||^2-\frac{C^*}{2p}||\nabla u||^{2p}.
 \end{equation}
 Denote $g(\alpha)=\frac12\alpha^2-\frac{C^*}{2p}\alpha^{2p}$. It's easy to verify $g(\alpha)$ is strictly increasing on $(0,\alpha_1)$, decreasing on $(\alpha_1,\infty)$ and attains its maximum at $\alpha=\alpha_1$:
 \begin{equation}
 \label{eq:4.2}
 g(\alpha_1)=\left(\frac12-\frac{1}{2p}\right){C^{*}}^{-1/(p-1)}=d.
 \end{equation}
 Let $\alpha_0=||\nabla u_0||>\alpha_1$, we obtain $g(\alpha_0)=g(||\nabla u_0||)\leq J(u_0)<d$ by  formula (\ref{eq:4.1}). Hence, we can find a $\alpha_2\in (\alpha_1,\alpha_0]$ such that $g(\alpha_2)=J(u_0)$. 
 
 We claim that $||\nabla u(t)||\geq \alpha_2$ for all $t\geq 0$. 
 Otherwise, by the continuity, we can choose $t_0>0$ such that $\alpha_1<||\nabla u(t_0)||< \alpha_2$. 
 Thus we know that  $d=g(\alpha_1)>g(||\nabla u(t_0)||)>g(\alpha_2)=J(u_0)$, which contradicts with the fact that 
 $J(u_0)<d$.
 
 The proof is now complete. 
 \end{proof}
 
With the help of the above lemma, we give the proof of Theorem \ref{thm:4.3}.
 
 \begin{proof}[Proof of Theorem \ref{thm:4.3}]
Let us consider the following two cases. 

  $\mathbf{Case \,1}$. $J(u_0)<d$.
  
On the one hand, since $I(u_0)<0$ and by using (\ref{eq:2.3}) we obtain 
  $$||\nabla u_0||^2<\int_{\Omega}v(u_0)|u_0|^pdx\leq{C^*}||\nabla u_0||^{2p},$$
which implies $||\nabla u_0||>{C^{*}}^{-1/(2p-2)}=\alpha_1$. Applying  Lemma \ref{lem:4.2} we get 
\begin{equation}
\label{eq:gradu}
||\nabla u(t)||\geq \alpha_2>\alpha_1, \quad t\in [0, T_{\max}).
\end{equation}

For $t\geq 0$, define $H(t)=d-J(u(t))$, $L(t)=H(t)+\frac12||u(t)||^2$.  Combining (\ref{eq:decj}), (\ref{eq:3.2}) and (\ref{eq:gradu}), we have
\begin{equation}
\label{eq:H}
H(t)>0
\end{equation}
and
\begin{align*}
L'(t)&=-\frac{dJ(u(t))}{dt}-I(u(t))=||u_t||^2+(p-1)||\nabla u||^2+2pH(t)-2pd \label{eq:4.6}\\
&\geq(p-1)||\nabla u||^2+2pH(t)-2pd\\
&=(p-1)\frac{\alpha_2^2-\alpha_1^2}{\alpha_2^2}||\nabla u||^2+(p-1)\frac{\alpha_1^2}{\alpha_2^2}||\nabla u||^2+2pH(t)-2pd\\
&\geq(p-1)\frac{\alpha_2^2-\alpha_1^2}{\alpha_2^2}||\nabla u||^2+(p-1)\alpha_1^2+2pH(t)-2pd .
\end{align*}
Notice that $2pd=(p-1){C^{*}}^{-1/(p-1)}=(p-1)\alpha_1^2$ follows from formula (\ref{eq:4.2}), then
\begin{equation}
\label{eq:4.3}
L'(t)\geq (p-1)\frac{\alpha_2^2-\alpha_1^2}{\alpha_2^2}||\nabla u||^2+2pH(t).
\end{equation}
 Let $C_1=\min \left \{2p,(p-1)\frac{\alpha_2^2-\alpha_1^2}{\alpha_2^2}\right \}$. Taking (\ref{eq:H}) into account, we have
\begin{equation}
\label{eq:4.4}
L'(t)\geq C_1\left(H(t)+||\nabla u||^2\right).
\end{equation}
Applying Poincar\'e inequality, one has
\begin{align}
L(t)=H(t)+\frac12||u||^2&\leq H(t)+\frac{C}{2}||\nabla u||^2 \notag\\
&\leq C_2\left(H(t)+||\nabla u||^2\right), \label{eq:4.5}
\end{align}
here $C_2=\max \left\{1, \frac{C}{2}\right\}$. Combining (\ref{eq:4.4}) and (\ref{eq:4.5}) we find that there exists $C_3>0$ such that $L'(t)\geq C_3L(t)$ for $t\geq 0$. Consequently, by Gronwall inequality we obtain
\begin{equation}
\label{eq:4.6}
L(t)\geq L(0)e^{C_3t},\quad t\geq 0.
\end{equation}

On the other hand, from formulas (\ref{eq:2.1}) and (\ref{eq:2.2}), we get $||\nabla u||^2\leq ||u||_{2n/(n-2)}^{2p}$ by taking $\delta=1$. By using Poincar\'e inequality, we find $||u||^2\leq C ||\nabla u||^2\leq C||u||_{2n/(n-2)}^{2p}$. Hence, combining the above estimates and (\ref{eq:2.1}), we obtain
\begin{align}
L(t)&=d+\frac{1}{2p}\int_{\Omega}v(u)|u|^pdx -\frac12||\nabla u||^2+\frac12||u||^2 \notag\\
&\leq d+C||u||_{2n/(n-2)}^{2p}  \label{eq:4.7}.
\end{align} 

Therefore, combining (\ref{eq:4.6}) and (\ref{eq:4.7}), it follows that $||u(t)||_{2n/(n-2)}$ will increase as an exponential function.
  
  $\mathbf{Case \,2}$. $J(u_0)=d$.
  
  Given an any fixed $t_0>0$, from the proof of Theorem \ref{thm:3.2}, we know $J(u(t)<d,I(u(t))<0$ for $t\geq t_0$. We also define $H(t)=d-J(u(t))>0,L(t)=H(t)+\frac12||u(t)||^2$ for $t\geq t_0$. Thus proceeding as in the proof of  $\mathbf{Case \,1}$, we see that the theorem holds.

This completes the proof.
  
 \end{proof}

\end{document}